\documentclass[12pt]{amsart}
\usepackage{xcolor}

\linespread{1.2}

\usepackage{cite}

\usepackage{graphicx}

\usepackage{amsmath,amsthm,amssymb} 
\newtheorem{theorem}{Theorem}

\newtheorem{conjecture}{Conjecture}
\newtheorem{problem}{Problem}
\setcounter{section}{-1}

\usepackage{marginnote,fullpage}

\newcommand{\abs}[1]{\left\lvert #1 \right\rvert}
\newcommand{\pa}[1]{\left( #1 \right)}

\setlength{\delimitershortfall}{1pt}
\setlength{\delimiterfactor}{705}


\begin{document}

\title{An extremal problem for odd univalent polynomials}

\author{Dmitriy Dmitrishin}
\address{Odessa National Polytechnic University, 1 Shevchenko Ave.,
Odesa 65044, Ukraine; email: {dmitrishin@op.edu.ua }}

\author{Daniel Gray}
\address{Georgia Southern University, Statesboro GA, 30460; \newline email: {dagray@georgiasouthern.edu}}

\author{Alex Stokolos}
\address{Georgia Southern University, Statesboro GA, 30460; \newline email: {astokolos@georgiasouthern.edu}}

\author{Iryna Tarasenko}
\address{Odessa National Academy of Communications,   22/1 Staroportofrankivska Str., Odesa 65000, Ukraine}

\begin{abstract}
For the univalent polynomials $F(z) = \sum\limits_{j=1}^{N} a_j z^{2j-1}$ with real coefficients and  normalization \(a_1 = 1\) we solve the  extremal problem
\[
 \min_{a_j:\,a_1=1} \left( -iF(i) \right) = \min_{a_j:\,a_1=1} \sum\limits_{j=1}^{N} {(-1)^{j+1} a_j}.
\]
We show that the solution is  $\frac12 \sec^2{\frac{\pi}{2N+2}},$ and the extremal polynomial
\[
 \sum_{j = 1}^N \frac{U'_{2(N-j+1)} \left( \cos\pa{\frac{\pi}{2N+2}}\right)}{U'_{2N} \left( \cos\pa{\frac{\pi}{2N+2}}\right)}z^{2j-1}
\]
is unique and univalent, where the \(U_j(x)\) are the Chebyshev polynomials of the second kind and \(U'_j(x)\) denotes the derivative. As an application, we obtain the estimate of the Koebe radius for the odd univalent polynomials in $\mathbb D$ and formulate  several conjectures.

\medskip
\noindent \textit{Keywords.} Chebyshev polynomials, odd univalent polynomials, Koebe one-quarter theorem.  
\end{abstract}
\maketitle

\section{{Motivation}}

This work is motivated by problems of classical geometric complex analysis. Let us recall the main milestones 
in the development of the theory.

The paper by Ludwig Bieberbach of 1916, in which he proved the exact estimate $\abs{a_2} \le 2$ for the second coefficient 
of the univalent in the unit disc functions $f (z)$ with the normalization $f (0) = 0$, $f '(0) = 1,$ can be considered 
the fundamental work. Such functions are called schlicht functions. This estimate immediately implies the Koebe 1/4 theorem, 
i.e. the statement that the image of any schlicht function contains a disc of radius 1/4. The assertion was put forward 
as a hypothesis by Paul Koebe, and the function $K(z) =\frac{z}{(1-z)^2} = z + 2z^2 + \ldots + jz^j + \ldots,$ 
now called the Koebe function, is a schlicht function and has the property that $K(-1) = -\frac14,$ thus showing sharpness of the constant. 

In that same work, Bieberbach stated in a footnote the remark that, in general $\abs{a_j} \le j$ is valid. This innocent-looking statement became known as the Bieberbach conjecture, which, since then, 
has been the engine of the development of geometric complex analysis for hundred years.

In 1984, the Bieberbach conjecture was proved by Louis de Branges, who also showed that the extremal function, up to rotation, 
is the Koebe function.

The odd Koebe function
$$
K^{(2)}(z)=(K(z^2))^{\frac12}=\frac{z}{1-z^2} = z+z^3+z^5+\ldots+z^{2j+1}+\ldots 
$$
has generated its own range of tasks, even more intriguing and unexpected than the general case. Thus, J. Littlewood 
and R.E.A.C. Paley conjectured \cite{L-P} that all the coefficients of an odd, i.e. 2-symmetric, schlicht function are bounded in 
absolute value by 1. The conjecture turned out to be false: M. Fekete and G. Szeg\"o proved the exact inequality 
$\abs{a_5} \le 1/2 + e^{-2/3} = 1.013\ldots$ \cite{Fekete}. However, W. Hayman \cite{Hayman} showed that for any odd schlicht function, 
except for the Koebe function $F^{(2)}(z)$, the limit of the absolute value of the coefficients is less than one. This implies 
that the Littlewood-Paley conjecture is valid for all but a finite number of coefficients. 

Many of the above-mentioned problems have the form of extremal problems and can be formulated for the univalent polynomials. Note that the coefficients of univalent functions can be sufficiently large, while the leading coefficient of a schlicht polynomial of degree $N$ is at most $1/N.$
As a result, partial sums of univalent functions, usually, are not univalent polynomials. And in general, the number of interesting examples of univalent polynomials is quite limited.\\

For instance, the value of the Koebe function $K(-1)$ is extreme for the problem of covering a disc by the image of the
unit disc. M.~Brandt~\cite[(486)]{Brandt1987} found the solution to the problem for the polynomials $F(z)=z+a_2z^2+...+a_Nz^N$ that are univalent in the disc $\mathbb D$.
\begin{equation}\label{brn}
\sup_{a_j:\, a_1=1} \left(F(-1)\right) =-\frac14 \sec^2{\frac{\pi}{N+2}}.
\end{equation}
Moreover, he wrote out the extremal polynomial \cite[(390)-(392)]{Brandt1987}. In \cite{DSS}, 
a more general formulation is considered; the extremal polynomial has the form
\begin{equation}\label{extr1}
\sum_{j = 1}^N 
\frac{U'_{N - j + 1}\left(\cos\pa{ \frac{\pi}{N + 2}}\right)}{U'_N\left(\cos\pa{\frac{\pi}{N + 2}}\right)}
 U_{j-1}\left(\cos\pa{\frac{\pi}{N + 2}}\right) z^j,
\end{equation}
which is more convenient for 
further generalizations, and it is shown that the extremal polynomial is unique. This problem can be considered as the problem of covering
a segment on the real axis by the image of the unit disc under the polynomial schlicht mapping. It is interesting to note 
that although the problems of covering a disc and a segment are related, neither of them is a consequence of the other. 
To the curious reader, we recommend to read~\cite{DDS}.

\section{{Introduction}}
Let us now consider an analogous problem for the univalent odd polynomials.

Optimization problem: on the class of odd polynomials of degree $2N-1$ 
$$
F(z) = \sum\limits_{j=1}^{N} {a_j z^{2j-1}} 
$$
that are univalent in $\mathbb D = \left\{ z:\, \abs{z}<1 \right\}$ and have real coefficients, find
\begin{equation}\label{jn}
J_N = \min_{a_j:\,a_1=1} \left( -iF(i) \right) = \min_{a_j:\,a_1=1} \left( \sum\limits_{j=1}^{N} {(-1)^{j+1} a_j} \right),
\end{equation}
as well as the polynomial (extremizer) that solves the optimization problem. Above $i^2=-1.$

Note that the possibility of writing ``minimum'' instead of ``infimum'' is guaranteed by DeBranges theorem, i.e., we can
consider the infimum only on a compact set since the coefficients of schlicht functions do not escape to infinity.

\section{{Main results}}

Consider this optimization problem for a wider class of polynomials than univalent ones, namely for those polynomials $F(z)$ that map 
the semidisc $\hat{D}=\left\{ z\in\mathbb{C}: \, \abs{z}<1,\, \text{Re}\{z\}>0 \right\}$ to the right half-plane 
$\hat{C}=\left\{ z\in\mathbb{C}: \, \text{Re}\{z\}>0 \right\}.$ 

Note that the odd algebraic polynomial $F(z)$ satisfies the previous condition if and only if the polynomial $i F(-iz)$ is typically real\footnote{
A polynomial $f(z)$ is typically real if it is real in the interval \((-1,1)\) and in all other points of the disc $\mathbb D$ we have $\Im\{f(z)\}\Im\{z\}>0$. Every univalent polynomial with real coefficients is also typically real.}

\begin{theorem}\label{01}
If the polynomial $i F(-iz)$ is typically real, then the equality 
$$
J_N = \frac12 \sec^2{\frac{\pi}{2N+2}} = -iF^{(0)}(i) \leq -iF(i)
$$
holds. In this case, the extremizer is unique and is given by the formula 
\begin{equation}\label{extr2}
F^{(0)}(z) = \sum\limits_{j=1}^{N}
{\frac{U'_{2(N-j+1)} \left( \cos\pa{\frac{\pi}{2N+2}}\right)}{U'_{2N} \left( \cos\pa{\frac{\pi}{2N+2}}\right)} z^{2j-1}}.
\end{equation} 
\end{theorem}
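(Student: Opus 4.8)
The plan is to reduce the problem, through the substitution $g(z)=iF(-iz)$, to a classical extremal problem for nonnegative cosine polynomials, and then to apply Fej\'er's inequality for the first cosine coefficient. First I would set $g(z)=iF(-iz)=\sum_{j=1}^{N}(-1)^{j+1}a_j z^{2j-1}$: this is again an odd polynomial of degree $2N-1$ with real coefficients, normalized by $g'(0)=a_1=1$, with $g(1)=-iF(i)=\sum_{j=1}^{N}(-1)^{j+1}a_j$, and it is typically real by hypothesis. So it suffices to minimize $g(1)$ over typically real odd polynomials $g(z)=\sum_{j=1}^N b_j z^{2j-1}$ with $b_1=1$. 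A polynomial with real coefficients is typically real exactly when $\operatorname{Im}g(e^{i\theta})/\sin\theta\ge 0$ for $\theta\in(0,\pi)$, the interior inequality following from the minimum principle for the harmonic function $\operatorname{Im}g$, which vanishes on the real diameter. For an odd $g$ this condition reads $Q(x):=\sum_{j=1}^{N}b_j U_{2j-2}(x)\ge 0$ on $[-1,1]$. Since $U_{2j-2}(\cos\theta)=1+2\sum_{m=1}^{j-1}\cos 2m\theta$, the half-angle substitution $\psi=2\theta$ turns $p(\psi):=Q(\cos(\psi/2))$ into an arbitrary cosine polynomial $p(\psi)=\sum_{k=0}^{N-1}c_k\cos k\psi$ of degree at most $N-1$, with ``$Q\ge 0$ on $[-1,1]$'' equivalent to ``$p\ge 0$ on $\mathbb R$''; the map $g\leftrightarrow p$ is a linear bijection.

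Next I would read off the two functionals from the Fourier coefficients of $p$. Fourier--sine inversion gives $b_j=\tfrac2\pi\int_0^\pi Q(\cos\theta)\sin\theta\,\sin((2j-1)\theta)\,d\theta$; summing over $j$ via the identity $\sum_{j=1}^{N}\sin((2j-1)\theta)=\sin^2(N\theta)/\sin\theta$ yields $g(1)=\tfrac2\pi\int_0^\pi Q(\cos\theta)\sin^2(N\theta)\,d\theta$, while $j=1$ gives $b_1=\tfrac2\pi\int_0^\pi Q(\cos\theta)\sin^2\theta\,d\theta$. Passing to $\psi=2\theta$, using $\sin^2(N\theta)=\tfrac12(1-\cos N\psi)$ and $\sin^2\theta=\tfrac12(1-\cos\psi)$ together with $\deg p\le N-1$, these collapse to $g(1)=c_0$ and $b_1=c_0-\tfrac12 c_1$. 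Hence the problem becomes: minimize $c_0$ over nonnegative cosine polynomials of degree at most $N-1$ subject to $c_0-\tfrac12 c_1=1$ (and note $c_0>0$ for any admissible $p$).

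Now Fej\'er's classical inequality---for a nonnegative cosine polynomial of degree at most $n$ one has $|c_1|\le 2c_0\cos\tfrac{\pi}{n+2}$---gives $c_1\ge -2c_0\cos\tfrac{\pi}{N+1}$, and together with $c_1=2c_0-2$ this forces
\[
c_0 \ge \frac{1}{1+\cos\tfrac{\pi}{N+1}} = \tfrac12\sec^2\tfrac{\pi}{2N+2} = J_N ,
\]
with equality only when $c_1=-2c_0\cos\tfrac{\pi}{N+1}$, i.e.\ only when $p$ is the suitably scaled reflection $p^{*}(\psi+\pi)$ of the unique Fej\'er extremal cosine polynomial $p^{*}$ of degree $N-1$. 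This determines $p$, hence $Q$, hence $g$, hence $F$, uniquely.

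It remains to identify this extremizer with $F^{(0)}$. The Fej\'er extremal polynomial $p^{*}$ is a positive multiple of $\bigl|\sum_{k=0}^{N-1}\sin\tfrac{(k+1)\pi}{N+1}\,e^{ik\psi}\bigr|^2$, so $p^{*}(\psi+\pi)$ has a double zero at $\psi_0=\pi/(N+1)$, i.e.\ at $\theta_0=\pi/(2N+2)$---which is precisely why $\cos\tfrac{\pi}{2N+2}$ appears in the answer. Recovering the coefficients $b_j^{(0)}=\tfrac2\pi\int_0^\pi Q^{(0)}(\cos\theta)\sin\theta\,\sin((2j-1)\theta)\,d\theta$ of the corresponding $g^{(0)}$ by standard Chebyshev identities---of the same type used for the companion problem for general polynomials in \cite{DSS}---yields the coefficients in \eqref{extr2}; alternatively one may verify directly that the displayed $F^{(0)}$ satisfies $b_1^{(0)}=1$, $g^{(0)}(1)=J_N$ and $Q^{(0)}\ge 0$ on $[-1,1]$, and then appeal to the uniqueness just proved. (That $F^{(0)}$ is itself univalent, so that the minimum $J_N$ over the univalent subclass is attained there as well, is checked separately, as in \cite{Brandt1987}.) I expect the identification of the extremal polynomial---matching the sine--Fourier coefficients of the Fej\'er extremizer with the closed form involving $U'_{2m}\!\bigl(\cos\tfrac{\pi}{2N+2}\bigr)$---to be the only step requiring real computation; a secondary point to handle with care is the exact bijection of the first paragraph, so that no extremal polynomials are lost along the way.
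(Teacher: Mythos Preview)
Your proposal is correct and follows essentially the same route as the paper: both reduce the constraint ``$iF(-iz)$ typically real'' to nonnegativity of a cosine polynomial of degree $N-1$ (the paper via the algebraic identity $\sum a_j\cos((2j-1)t)=\gamma_1\cos t\bigl(1+2\sum(\gamma_j/\gamma_1)\cos 2(j-1)t\bigr)$, you via $Q(\cos\theta)=\sum b_jU_{2j-2}(\cos\theta)$ and the half--angle substitution $\psi=2\theta$, which is the same reduction up to the reflection $\theta=\tfrac{\pi}{2}-t$), and then both invoke Fej\'er's inequality $|c_1|\le 2c_0\cos\tfrac{\pi}{N+1}$ together with the uniqueness of the Fej\'er extremizer. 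The only place where the paper is more explicit than your sketch is the final identification of the extremal coefficients with $U'_{2(N-j+1)}/U'_{2N}$ at $\cos\tfrac{\pi}{2N+2}$: the paper pulls the closed form of the Fej\'er extremizer from Egerv\'ary~\cite{Eger}, simplifies with a trigonometric identity, and then matches it to the Chebyshev--derivative expression via $U'_k(x)=\tfrac{1}{2(1-x^2)}\bigl((k+2)U_{k-1}(x)-kU_{k+1}(x)\bigr)$, whereas you leave this as ``the only step requiring real computation''.
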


\begin{proof}
Since $iF(-iz)$ is typically real, this means $F(z)$ maps the semidisc $\hat{D}$ to the half-plane $\hat{C}$. It follows that $\displaystyle \text{Re}(F(z)) = \sum\limits_{j=1}^{N} {a_j \cos\pa{(2j-1)t}} \ge 0$. Thus, we have 
$$
J_N = \min_{a_j:\,a_1=1}\left( \abs{F(i)} \right) = 
$$
$$
= \inf_{a_j} \left\{ 
1 - a_2 + a_3 - \ldots :
\sum\limits_{j=1}^{N} {a_j \cos\pa{(2j-1)t}} \ge 0, \, t\in \left[ 0, \frac{\pi}{2} \right]
\right\}.
$$
The equality 
\begin{equation}\label{sum}
\sum\limits_{j=1}^{N} {a_j \cos\pa{(2j-1)t}} = \gamma_1 \cos\pa{t} 
\left( 1 + 2\sum\limits_{j=2}^{N} {\frac{\gamma_j}{\gamma_1} \cos\pa{2(j-1)t}} \right)
\end{equation}
holds, where the coefficients $a_1,\ldots,a_N$ and $\gamma_1,\ldots,\gamma_N$ are related by the bijective relationship
\begin{equation}\label{cg}
\left\{\begin{array}{ll}
\gamma_s = \sum\limits_{j=s}^{N} {(-1)^{s+j} a_j}, \\
s = 1,\ldots, N,
\end{array}\right.
\end{equation}
which is equivalent to 
\begin{equation}\label{ca}
\left\{\begin{array}{ll}
a_s = \gamma_s + \gamma_{s+1}, \\
s = 1,\ldots, N. 
\end{array}\right.
\end{equation}

In (\ref{ca}), for convenience, it is assumed that $\gamma_{N+1}=0.$ It follows from (\ref{cg}) or (\ref{ca}) that 
$\gamma_1 = \sum\limits_{j=1}^{N} {(-1)^{s+1} a_j}$ and $a_1 = \gamma_1 + \gamma_2 = 1.$ Then 
$$
J_N = \min_{\gamma_j:\,\gamma_1 + \gamma_2 = 1}
\left\{
\gamma_1:\,
1 + 2\sum\limits_{j=1}^{N-1} {\frac{\gamma_{j+1}}{\gamma_1} \cos\pa{2t j}} \ge 0
\right\}.
$$
According to Problem 52 in \cite[p. 79]{Pol-Sz} or (1.3) in \cite{And-Dim}, a nonnegative trigonometric polynomial of the form
$$
1 + \sum_{j = 1}^{N - 1}\pa{\lambda_j\cos\pa{2t j} + \mu_j\sin\pa{2t j}}
$$
must satisfy the inequality $\sqrt{\lambda_1^2 + \mu_1^2} \leq 2\cos\pa{\frac{\pi}{N + 1}}$. Since our nonnegative trigonometric polynomial
$$
1 + 2\sum\limits_{j=1}^{N-1} {\frac{\gamma_{j+1}}{\gamma_1} \cos\pa{2t j}}
$$
has no sine terms, the inequality simplifies, yielding 
$$
\sqrt{\pa{2 \cdot \frac{\gamma_2}{\gamma_1}}^2} \leq 2\cos\pa{\frac{\pi}{N + 1}} \implies \abs{\frac{\gamma_2}{\gamma_1}} = \abs{\frac{1 - \gamma_1}{\gamma_1}} \le \cos\pa{\frac{\pi}{N+1}}.
$$

From this, taking into account that $0< \gamma_1 <1,$ we obtain
$$
\gamma_1 \ge
\frac{1}{1+\cos\pa{\frac{\pi}{N+1}}} = \frac{1}{2\cos^2\pa{\frac{\pi}{2N+2}}}.
$$
Therefore, $\gamma_1 = \frac{1}{1+\cos\pa{\pi / (N+1)}} = \frac{1}{2\cos^2\pa{\pi / (2N+2)}}.$ The remaining 
coefficients are uniquely determined \cite{Eger} from the condition
\begin{gather*}
\abs{\frac{\gamma_{j+1}}{\gamma_1}} = 
\frac{1}{2(N+1) \sin\pa{\frac{\pi}{N+1}}}
\left[
(N-j+2) \sin\pa{\frac{\pi (j+1)}{N+1}} - \right. 
\\ \left.- (N-j) \sin\pa{\frac{\pi (j-1)}{N+1}}
\right], \\
j=2,\ldots,N-1.
\end{gather*}
Determine from (\ref{ca}) the coefficients of the extremizer (these are unique):
\begin{gather*}
a^0_j = \frac{1}{2(N+1) \sin\pa{\frac{\pi}{N+1}} \left( 1+\cos\pa{\frac{\pi}{N+1}} \right)}
\left( (N-j+3) \sin\pa{\frac{\pi j}{N+1}} - \right.\\
 - (N-j+1) \sin\pa{\frac{\pi (j-2)}{N+1}} + (N-j+2) \sin\pa{\frac{\pi (j+1)}{N+1}} - \\
 \left.- (N-j) \sin\pa{\frac{\pi (j-1)}{N+1}} \right), \\
j=1,\ldots,N-1,
\end{gather*}
\begin{gather*}
a^{(0)}_{N} = \frac{1}{2(N+1) \sin\pa{\frac{\pi}{N+1}} \left( 1+\cos\pa{\frac{\pi}{N+1}} \right)}
\left( 3\sin\pa{\frac{\pi}{N+1}} - \right. \\
- \left. \sin\pa{\frac{3\pi}{N+1}}\right) = \\
= \frac{2}{N+1} \left( 1 - \cos\pa{\frac{\pi}{N+1}} \right) = \frac{4}{N+1} \sin^2\pa{\frac{\pi}{2N+2}}.
\end{gather*}
The formulas for the extremizer coefficients can be simplified using an easily verifiable identity:
\begin{gather*}
(N-j+3) \sin\pa{\frac{\pi j}{N+1}} - (N-j+1) \sin\pa{\frac{\pi (j-2)}{N+1}} + \\
+ (N-j+2) \sin\pa{\frac{\pi (j+1)}{N+1}} - (N-j) \sin\pa{\frac{\pi (j-1)}{N+1}} = \\
= 2 \left( 1+\cos\pa{\frac{\pi}{N+1}} \right) 
\left( (N-j+2) \sin\pa{\frac{\pi j}{N+1}} - \right. \\
\left. - (N-j+1) \sin\pa{\frac{\pi (j-1)}{N+1}} \right).
\end{gather*}

Then
\begin{gather*}
a^0_j = \frac{1}{(N+1) \sin\pa{\frac{\pi}{N+1}}}
\left( (N-j+2) \sin\pa{\frac{\pi j}{N+1}} - \right. \\
\left. - (N-j+1) \sin\pa{\frac{\pi (j-1)}{N+1}} \right), \\
j=1,\ldots,N.
\end{gather*}
Applying the formula \cite[Lemma 2]{DSS} 
$$
U'_{k}(x) = \frac{1}{2(1-x^2)} \left( (k+2)U_{k-1}(x) - kU_{k+1}(x)\right),
$$
where $U_j(x) = U_j (\cos\pa{t}) = \frac{\sin\pa{(j+1)t}}{\sin\pa{t}} = 2^j x^j + \ldots$ is the family of Chebyshev polynomials 
of the second kind, we can express the coefficients $a_j^0$ in terms of the derivatives of Chebyshev polynomials of 
the second kind:
$$
a_j^0 = \frac{U'_{2(N-j+1)} \left( \cos\pa{\frac{\pi}{2N+2}}\right)}{U'_{2N} \left( \cos\pa{\frac{\pi}{2N+2}}\right)},\;
j=1,\ldots,N.
$$
The theorem is fully proved.
\end{proof}

The extremizer allows a closed-form representation.

\begin{theorem}\label{02}
The following representation holds:
\begin{equation}\label{extr}
F^{(0)}(z) = z \frac
{4z^2 \left( z^{2N+2} (1+z^2) + N(1-z^2) + 2\right) \sin^2\pa{\frac{\pi}{2N+2}} + (N+1)(1-z^2)^3}
{(N+1) \left( z^4 - 2z^2 \cos\pa{\frac{\pi}{N+1}} + 1\right)^2}=
\end{equation}
$$
-\frac{z(z^2-1)}{z^4-2z^2\cos\pa{\frac{\pi}{N+1}} + 1}+4\sin^2\frac{\pi}{2N+2} \cdot \frac{z^3(z^2+1)(z^{2N+2}+1)}{(N+1)(z^4-2z^2\cos\pa{\frac\pi{N+1}}+1)^2}.
$$
\end{theorem}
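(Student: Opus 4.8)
The plan is to sum the explicit power series for the extremizer produced in the proof of Theorem~\ref{01}. Writing $\alpha=\pi/(N+1)$, that proof gives $F^{(0)}(z)=\sum_{j=1}^{N}a_j^0z^{2j-1}$ with $a_j^0=\frac{1}{(N+1)\sin\alpha}\bigl((N-j+2)\sin(j\alpha)-(N-j+1)\sin((j-1)\alpha)\bigr)$. Putting $w=z^2$ and using $N-j+2=(N+1)-(j-1)$, $N-j+1=N-(j-1)$, I would rewrite $(N+1)\sin\alpha\sum_{j=1}^{N}a_j^0w^{\,j-1}$ as a fixed linear combination of $\sum_{j=1}^{N}\sin(j\alpha)w^{\,j-1}$, $\sum_{j=1}^{N}j\sin(j\alpha)w^{\,j-1}$ and their index shifts; each of these is evaluated in closed form from $\sum_{j=1}^{N}(we^{i\alpha})^{j}=\dfrac{we^{i\alpha}+w^{N+1}}{1-we^{i\alpha}}$ (take imaginary parts, using $e^{iN\alpha}=-e^{-i\alpha}$) and from its image under $w\,d/dw$. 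The decisive point is that $\alpha=\pi/(N+1)$, so $\sin((N+1)\alpha)=0$, $\cos((N+1)\alpha)=-1$, $\sin((N+2)\alpha)=-\sin\alpha$, and so on; this collapses almost all of the boundary terms created by truncating the geometric sums, and after collecting over the common denominator $(z^4-2z^2\cos\alpha+1)^2$ one is left with a numerator that is a fixed cubic in $z^2$ plus a single tail term proportional to $z^{2N+4}(1+z^2)$. Rewriting $1+\cos\alpha=2\cos^2(\alpha/2)$ and $1-\cos\alpha=2\sin^2(\alpha/2)$ then gives the first representation in (\ref{extr}).

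A cleaner route works on the unit circle. By (\ref{sum}), for the extremizer $\text{Re}\,F^{(0)}(e^{it})=\gamma_1^0\cos t\cdot K(2t)$, where $K$ is the extremal nonnegative cosine polynomial; by the classical solution of Fej\'er's extremal problem (\cite{Pol-Sz,Eger}) one has $K(s)=\tfrac{2}{N+1}\,\bigl|Q(e^{is})\bigr|^{2}$ with $Q(\zeta)=\sum_{k=0}^{N-1}\sin((k+1)\alpha)\zeta^{k}=\dfrac{\sin\alpha\,(\zeta^{N+1}+1)}{\zeta^{2}-2\zeta\cos\alpha+1}$, a polynomial obeying $Q(1/\zeta)=\zeta^{1-N}Q(\zeta)$. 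Since $\gamma_1^0=1/(1+\cos\alpha)$ and $2\gamma_1^0\sin^2\alpha=2(1-\cos\alpha)$, the identity $2\,\text{Re}\,F^{(0)}(e^{it})=F^{(0)}(z)+F^{(0)}(1/z)$ (valid as rational functions, $z=e^{it}$) takes the form
$$
F^{(0)}(z)+F^{(0)}(1/z)=\frac{4\sin^2(\alpha/2)}{N+1}\cdot\frac{z^{1-2N}(1+z^2)(z^{2N+2}+1)^2}{(z^4-2z^2\cos\alpha+1)^2};
$$
call this rational function $R(z)$. A polynomial $p$ with $p(0)=0$ and $p(z)+p(1/z)=R(z)$ is unique (compare Laurent coefficients at $0$), so it remains to verify that the expression $\Phi$ on the second line of (\ref{extr}) is such a polynomial. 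For $\Phi(z)+\Phi(1/z)=R(z)$: the summand $\tfrac{z(1-z^2)}{z^4-2z^2\cos\alpha+1}$ is antisymmetric under $z\mapsto1/z$, so only the correction term contributes, and symmetrizing it and using $z^3+z^{1-2N}=z^{1-2N}(z^{2N+2}+1)$ reproduces $R(z)$ in one line. That $\Phi$ is a polynomial of degree $\le 2N-1$ with $\Phi(0)=0$ follows because $z^4-2z^2\cos\alpha+1$ divides $z^{2N+2}+1$ (its roots satisfy $z^{2N+2}=-1$), which turns the correction term into $\tfrac{4\sin^2(\alpha/2)}{N+1}\,z^3(z^2+1)S(z)/(z^4-2z^2\cos\alpha+1)$ with $S$ a polynomial of degree $2N-2$; a short evaluation at the roots of $z^4-2z^2\cos\alpha+1$ shows the remaining pole is removable.

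Finally, the two displayed forms in (\ref{extr}) coincide by a routine manipulation: multiplying both by $(N+1)(z^4-2z^2\cos\alpha+1)^2/z$ and subtracting, the $z^{2N+2}$-terms cancel identically and the remainder reduces to the elementary identity $(z^4-2z^2\cos\alpha+1)-(1-z^2)^2=2z^2(1-\cos\alpha)$ together with $4\sin^2(\alpha/2)=2(1-\cos\alpha)$, after which the remaining coefficients match directly. The main obstacle is the bookkeeping of the first route — confirming that the truncation terms of the geometric sums assemble into precisely the numerator $4z^2\bigl(z^{2N+2}(1+z^2)+N(1-z^2)+2\bigr)\sin^2\frac{\pi}{2N+2}+(N+1)(1-z^2)^3$; the second route trades this for the classical identification of the extremal Fej\'er kernel.
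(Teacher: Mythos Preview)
Your first route---summing the explicit $a_j^0$ from Theorem~\ref{01} via the geometric series $\sum_{j=1}^{N}(we^{i\alpha})^{j}$ and its image under $w\,d/dw$, then collecting over $(z^4-2z^2\cos\alpha+1)^2$---is exactly what the paper does; its entire proof is the one-line remark that the representation ``follows from the formulas for the sum of a geometric series and for the derivative of a geometric series,'' so neither the paper nor you writes out the bookkeeping, and that is acceptable here.

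Your second route is genuinely different from the paper and is worth recording. The paper never exploits the Fej\'er--Riesz factorization of the extremal kernel; you observe that the nonnegative cosine polynomial appearing in (\ref{sum}) equals $\tfrac{2}{N+1}\bigl|Q(e^{is})\bigr|^2$ with $Q(\zeta)=\sin\alpha\,(\zeta^{N+1}+1)/(\zeta^{2}-2\zeta\cos\alpha+1)$, which immediately gives $\mathrm{Re}\,F^{(0)}$ on the circle in closed form, and then characterize $F^{(0)}$ as the unique polynomial $p$ with $p(0)=0$ satisfying $p(z)+p(1/z)=R(z)$. This replaces the direct summation by a one-line symmetry check plus one residue computation. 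The only point that needs care is the step ``a short evaluation at the roots of $z^4-2z^2\cos\alpha+1$ shows the remaining pole is removable'': this is necessary (the antisymmetry $g(z)=-g(1/z)$ alone does not exclude poles on $|z|=1$), and it does go through---at $z_0^{2}=e^{i\alpha}$ one finds $S(z_0)=-(N+1)e^{-i\alpha}/(2i\sin\alpha)$ and the numerator $z_0(1-e^{i\alpha})+\tfrac{4\sin^2(\alpha/2)}{N+1}\,z_0^{3}(1+e^{i\alpha})S(z_0)$ vanishes after the half-angle identities. Compared with the paper's approach, yours explains structurally why the closed form splits as the ``principal'' term $z(1-z^2)/(z^4-2z^2\cos\alpha+1)$ plus a correction proportional to $\sin^2\frac{\pi}{2N+2}$, whereas the direct summation merely produces that splitting at the end.
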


The proof follows from the formulas for the sum of a geometric series and for the derivative of a geometric series.

Note that changing the schlicht normalization $F(0)=0,\, F'(0)=1$ to the normalization $F(0)=0,\,F(1)=1$ leads to other 
extremal polynomials, namely, to the Suffridge and odd Fejér polynomials, which is interesting in itself. Details can
be found in \cite{DHKKS}.

\section{{Extremizer univalence}}

Generally, the task of proving the univalence of functions, and polynomials in particular, is far from simple. The number of 
popular examples of such polynomials is quite limited, especially for odd polynomials. Nevertheless, we managed to prove 
the univalence of the extremal polynomial in our case, which can be considered an undoubted success.

\begin{theorem}\label{03}
The polynomial $F^{(0)}(z)$ is univalent in $\mathbb D.$
\end{theorem}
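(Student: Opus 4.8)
The plan is to exploit the closed-form expression \eqref{extr} from Theorem~\ref{02} together with the standard fact that an odd polynomial $F(z)=\sum a_j z^{2j-1}$ is univalent in $\mathbb D$ precisely when the associated polynomial $g(w)$ defined by $g(z^2)=F(z)/z$, i.e. $g(w)=\sum a_j w^{j-1}$, maps $\mathbb D$ in a way compatible with the $2$-fold symmetry; more directly, $F$ is univalent iff for all $z_1\ne z_2$ in $\mathbb D$ one has $F(z_1)\ne F(z_2)$, and for odd $F$ this splits into the condition that $F$ is injective on $\overline{\mathbb D}$. A cleaner route is to use the Dieudonné criterion (or the Cohn/Schur-type argument used for such extremal polynomials): an odd polynomial with real coefficients is univalent in $\mathbb D$ if and only if $F'(z)\ne 0$ in $\mathbb D$ and $F$ is univalent on the boundary circle, which for real-coefficient typically-real polynomials reduces to checking that $t\mapsto \Im F(e^{it})$ (equivalently a single trigonometric polynomial) does not vanish to create a self-intersection on $|z|=1$.

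First I would record that $F^{(0)}$ is typically real — this is immediate since by Theorem~\ref{01} it arises as the extremizer within the typically-real class, so $iF^{(0)}(-iz)$ is typically real and hence $F^{(0)}$ maps the upper semidisc into the upper half-plane. Next I would compute $(F^{(0)})'(z)$ from the rational representation \eqref{extr}; the denominator is $(N+1)\bigl(z^4-2z^2\cos\frac{\pi}{N+1}+1\bigr)^2$, whose zeros lie on $|z|=1$, so all poles are on the boundary and $F^{(0)}$ is genuinely a polynomial (the numerator is divisible by the denominator). The key computation is to show $(F^{(0)})'(z)\neq 0$ for $z\in\mathbb D$: I expect $(F^{(0)})'(z)$ to factor, after clearing denominators, into an expression whose zeros can be located on $|z|\ge 1$ by a Chebyshev/Eneström--Kakeya-type coefficient argument, using the explicit formula $a_j^0 = U'_{2(N-j+1)}(\cos\frac{\pi}{2N+2})/U'_{2N}(\cos\frac{\pi}{2N+2})$ and the fact that the $a_j^0$ form a positive, suitably monotone sequence (so that $\sum (2j-1)a_j^0 w^{j-1}$ is zero-free in the disc).

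Then I would upgrade local univalence to global univalence. For typically-real polynomials with real coefficients a convenient tool is the following: $F$ is univalent in $\mathbb D$ iff $\Re\frac{F(z_1)-F(z_2)}{z_1-z_2}>0$ whenever $z_1,z_2$ are distinct points with $\Im z_1,\Im z_2\ge 0$; equivalently one checks injectivity on the boundary semicircle and uses the argument principle / Darboux's theorem that a function holomorphic on $\overline{\mathbb D}$ with $F'\ne0$ inside and injective boundary image is univalent. On $|z|=1$, write $z=e^{it}$ and $F^{(0)}(e^{it})=e^{it}R(e^{2it})$ where $R$ is the rational function in \eqref{extr}; reducing to the half-period $t\in[0,\pi]$ by oddness, univalence on the circle amounts to strict monotonicity of the boundary parametrization, which I would verify by showing the relevant trigonometric polynomial (essentially $\frac{d}{dt}\Im F^{(0)}(e^{it})$, or $\Im\bigl(e^{it}(F^{(0)})'(e^{it})\bigr)$) has constant sign on $(0,\pi)$ except at the finitely many forced zeros coming from the denominator; here the explicit sine-sum formulas for $a_j^0$ derived in the proof of Theorem~\ref{01} should give a telescoping or single-sign expression.

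The main obstacle will be the zero-free verification for $(F^{(0)})'$ inside $\mathbb D$ together with the boundary monotonicity: unlike the non-odd case in \cite{DSS}, the $2$-symmetry means $(F^{(0)})'$ is a polynomial in $z^2$ of degree $2N-2$ with a double-pole structure that does not obviously telescope, so a direct Eneström--Kakeya argument may fail and one likely needs a more delicate factorization — perhaps exhibiting $(F^{(0)})'(z)$ explicitly as a perfect square times a positive factor, or relating it to $U'_{2N}$ evaluated along a Chebyshev substitution $z+z^{-1}=2\cos\theta$. I would first attempt the substitution $x=\cos t$, $z^2=e^{2it}$ to convert the problem into a statement about real zeros of a combination of $U_{2N}$ and its derivative on $[-1,1]$, where the interlacing of zeros of Chebyshev polynomials and their derivatives can be invoked; if that succeeds, both the interior zero-free property and the boundary monotonicity follow from the same Sturm-type interlacing, completing the proof.
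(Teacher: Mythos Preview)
Your proposal is a plan rather than a proof: none of the announced computations are actually carried out, and the decisive step --- boundary injectivity --- is left at the level of ``I would verify by showing the relevant trigonometric polynomial \ldots\ has constant sign''. That hope is in fact false, and this is the genuine gap. Writing $u(t)=\Re F^{(0)}(e^{it})$ and $v(t)=\Im F^{(0)}(e^{it})$, neither $v'(t)$ nor $\Im\bigl(e^{it}(F^{(0)})'(e^{it})\bigr)$ keeps a constant sign on $(0,\pi/2)$; the boundary curve is not starlike, not convex, and no single monotone coordinate works on the whole quarter-period. The paper's proof succeeds precisely by abandoning a one-shot monotonicity argument and instead splitting $[0,\pi/2]$ at $t=\pi/(N+1)$: it shows (A) $v(t)\ge 0$ on $[0,\pi/2]$, (B) $u(t)$ is strictly decreasing on $[0,\pi/(N+1)]$, (C) $u(t)<u(\pi/(N+1))$ for $t\in(\pi/(N+1),\pi/2]$, and (D) $v(t)$ is strictly decreasing on $(\pi/(N+1),\pi/2]$. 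Statements B--D together rule out any solution of $u(t_1)=u(t_2),\ v(t_1)=v(t_2)$ with $t_1\ne t_2$, and (A) plus the four-fold symmetry of an odd real-coefficient polynomial then makes the full boundary curve $\{F^{(0)}(e^{it}):t\in[0,2\pi]\}$ simple, whence univalence follows from the argument principle. Each of (A)--(D) requires real work: product formulas for $\sin((2N+2)t)$ and $\cos((N+1)t)$, and for (D) a careful estimate of $v'(t)$ involving a further subdivision of $(\pi/(N+1),\pi/2)$ and several auxiliary inequalities. Your proposed Enestr\"om--Kakeya / Chebyshev-interlacing route for these boundary facts is not fleshed out, and there is no indication that it would yield the needed two-interval structure.

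Two smaller remarks. First, the detour through $(F^{(0)})'\ne 0$ in $\mathbb D$ is unnecessary: once the boundary image is a simple closed curve, univalence on $\overline{\mathbb D}$ follows directly from the argument principle, so the interior zero-free check adds nothing. Second, the statement ``hence $F^{(0)}$ maps the upper semidisc into the upper half-plane'' is the wrong half-plane: the hypothesis that $iF^{(0)}(-iz)$ is typically real is equivalent to $F^{(0)}$ mapping the \emph{right} semidisc $\{\,|z|<1,\ \Re z>0\,\}$ into the right half-plane, which is exactly the nonnegativity of $\sum a_j^{(0)}\cos((2j-1)t)$ used in Theorem~\ref{01}.
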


Denote
$$
u(t):= \text{Re}\left\{ F^{(0)} (e^{it}) \right\} = 
\frac{4\sin^2\pa{\frac{\pi}{2N+2}}}{N+1} \frac{\cos\pa{t} \cos^2\pa{(N+1)t}}{\left( \cos\pa{2t} - \cos\pa{\frac{\pi}{N+1}}\right)^2},
$$
$$
v(t):= \text{Im} \left\{ F^{(0)}(e^{it}) \right\} =
$$
$$
= \frac{\sin\pa{(2N+2)t} \cos\pa{t} \left( 1 - \cos\pa{\frac{\pi}{N+1}} \right) - (N+1) \sin\pa{t} \left( \cos\pa{2t} - \cos\pa{\frac{\pi}{N+1}}\right) }
{(N+1) \left( \cos\pa{2t} - \cos\pa{\frac{\pi}{N+1}}\right)^2}.
$$

We want to show that the set $\Gamma =  \left\{ F^{(0)}(e^{it}),\, t\in\left[ 0, \frac{\pi}{2}\right] \right\} $ defines a simple curve 
(without self-crossing) on the complex plane, which also does not intersect the real axis. That is, first it is to be proved that there holds

{\bf Statement A.} $v(t) \ge 0,\;  t\in\left[ 0, \frac{\pi}{2}\right].$  

Then the curve $\hat{\Gamma} =  \left\{ F^{(0)}(e^{it}),\, t\in\left[ 0, 2\pi \right] \right\} $ turns out to be a simple curve too. 
This follows from the fact that the curve $\hat{\Gamma}$ is symmetric with respect to the imaginary and real axes. I.e., the image of 
the boundary of the disc $D$ is a simple curve on the complex plane; therefore, the mapping will be univalent. 

The curve $\Gamma$ will be simple if the system of equations 
$$
\left\{\begin{array}{ll}
u(t_1)=u(t_2), \\
v(t_1)=v(t_2),
\end{array}\right.
$$
has no solutions in the region $t_1 \in \left[ 0, \frac{\pi}{2}\right],$ $t_2 \in \left[ 0, \frac{\pi}{2}\right],$ $t_1 \neq t_2.$ 
This property will certainly be true if the following statements are fulfilled:

{\bf Statement B.} The function $u(t)$ decreases when $t \in  \left[ 0, \frac\pi{N+1}\right].$

{\bf Statement C.}  For $t\in\left(\frac\pi{N+1},\frac\pi2\right]$ the inequality holds
$$
u(t)<u\left(\frac\pi{N+1}\right).
$$

{\bf Statement D.}
The function $v(t)$ decreases when $t\in\left(\frac\pi{N+1},\frac\pi2\right].$

The proofs of {\it Statements A and B} will be given in Section 3.1, and the proof of {\it Statements C and D} will be given in Section 3.2. In proving {\it Statement B}, we will prove the following stronger statement: the function $u(t)$ is positive and decreasing for $t \in  \pa{0, \frac{3\pi}{2N+2}}.$

\subsection*{\indent 3.1 Proofs of Statements A, B} 
\indent 

{\it Proof of Statement A.} Note that the function $v(t)$ is continuous, except for the point $t_0 = \frac{\pi}{2N+2}$ where the function 
has a removable discontinuity. Let us use the formula 1.391.1 from \cite[p.40]{Grad}
$$
\sin\pa{(2N+2)t} = 2(N+1) \sin\pa{t}\cos\pa{t} \prod\limits_{k=1}^N \left( 1 - \frac{\sin^2\pa{t}}{\sin^2\pa{\frac{k\pi}{2N+2}}} \right)
$$
and well-known trigonometric formulas. Then
$$
v(t) = \frac{\sin\pa{t}}{\sin^2\pa{t} - \sin^2\pa{\frac{\pi}{2N+2}}}
\left( \frac12 - \cos^2\pa{t} \prod\limits_{k=2}^N \left( 1 - \frac{\sin^2\pa{t}}{\sin^2\pa{\frac{k\pi}{2N+2}}} \right) \right).
$$
Each expression in parentheses in the product $\prod\limits_{k=2}^N \left( 1 - \frac{\sin^2\pa{t}}{\sin^2\pa{\frac{k\pi}{2N+2}}} \right) $ is positive 
and decreases when $t \in \left( 0, \frac{\pi}{N+1} \right).$ Hence, the function 
$$
\frac12 - \cos^2\pa{t} \prod\limits_{k=2}^N \left( 1 - \frac{\sin^2\pa{t}}{\sin^2\pa{\frac{k\pi}{2N+2}}} \right)
$$ 
increases on the interval
$\left( 0, \frac{\pi}{N+1} \right)$; moreover, it increases from $-\frac12$ to zero when $t \in \left( 0, \frac{\pi}{2N+2} \right)$ and from zero to 
$\frac12$ when $t \in \left( \frac{\pi}{2N+2}, \frac{\pi}{N+1} \right).$ Thus, the function $v(t)$ is positive on 
$\left( 0, \frac{\pi}{N+1} \right).$

Let $t\in \left( \frac{\pi}{N+1}, \frac{\pi}{2} \right).$ We have 
$$
\sin\pa{(2N+2)t} \cos\pa{t} \left( 1 - \cos\pa{\frac{\pi}{N+1}} \right) - (N+1) \sin\pa{t} \left( \cos\pa{2t} - \cos\pa{\frac{\pi}{N+1}} \right) >
$$
$$
> -2\sin^2\pa{\frac{\pi}{2N+2}} + 2(N+1) \sin\pa{t} \left( \sin^2\pa{t} - \sin^2\pa{\frac{\pi}{2N+2}} \right) \ge
$$
$$
\ge -2\sin^2\pa{\frac{\pi}{2N+2}} + 2(N+1) \sin\pa{\frac{\pi}{N+1}} \left( \sin^2\pa{\frac{\pi}{N+1}} - \sin^2\pa{\frac{\pi}{2N+2}} \right),
$$
because the function $-2\sin^2\pa{\frac{\pi}{2N+2}} + 2(N+1) \sin\pa{t} \left( \sin^2\pa{t} - \sin^2\pa{\frac{\pi}{2N+2}} \right)$ increases on
$\left( \frac{\pi}{N+1}, \frac{\pi}{2} \right).$ The inequality
$$
-2\sin^2\pa{\frac{\pi}{2N+2}} + 2(N+1) \sin\pa{\frac{\pi}{N+1}} \left( \sin^2\pa{\frac{\pi}{N+1}} - \sin^2\pa{\frac{\pi}{2N+2}} \right) > 0
$$
is equivalent to the inequality
$$
(N+1) \sin\pa{\frac{\pi}{N+1}} \left( 1 + 2\cos\pa{\frac{\pi}{N+1}} \right) > 1.
$$
The last inequality is obvious, since $(N+1) \sin\pa{\frac{\pi}{N+1}} > 1.$

{\it Proof of Statement B.} The function $u(t)$ is continuous, except for the point $t_0 = \frac{\pi}{2N+2}$ where the function 
has a removable discontinuity. The formulas 1.391.2 and 1.391.4 from \cite[p.40]{Grad} say
$$
\cos\pa{(N+1)t} = \prod\limits_{k=1}^{\frac{N}{2}} {\left( 1 - \frac{\sin^2\pa{t}}{\sin^2\pa{\frac{(2k-1)\pi}{2N+2}}} \right)}\; 
\text{if } N \text{ is even},
$$
$$
\cos\pa{(N+1)t} = \cos\pa{t} \prod\limits_{k=1}^{\frac{N+1}{2}} {\left( 1 - \frac{\sin^2\pa{t}}{\sin^2\pa{\frac{(2k-1)\pi}{2N+2}}} \right)}\; 
\text{if } N \text{ is odd}.
$$

Then 
$$
\frac{\cos\pa{(N+1)t}}{\cos\pa{2t} - \cos\pa{\frac{\pi}{N+1}}} = \frac{1}{2\sin^2\pa{\frac{\pi}{2N+2}}}
\prod\limits_{k=2}^{\frac{N}{2}} {\left( 1 - \frac{\sin^2\pa{t}}{\sin^2\pa{\frac{(2k-1)\pi}{2N+2}}} \right)}\; 
\text{if } N \text{ is even},
$$
$$
\frac{\cos\pa{(N+1)t}}{\cos\pa{2t} - \cos\pa{\frac{\pi}{N+1}}} = \frac{\cos\pa{t}}{2\sin^2\pa{\frac{\pi}{2N+2}}}
\prod\limits_{k=2}^{\frac{N+1}{2}} {\left( 1 - \frac{\sin^2\pa{t}}{\sin^2\pa{\frac{(2k-1)\pi}{2N+2}}} \right)}\; 
\text{if } N \text{ is odd}.
$$

When $t \in \left( 0, \frac{3\pi}{2N+2} \right),$ each expression in parentheses in the product is positive and decreases
in $t.$ Hence, the entire product decreases, and then the function $u(t)$ also decreases as the product of positive 
decreasing functions. Because $\frac{3\pi}{2N + 2} = \frac{3}{2} \cdot \frac{\pi}{N + 1} > \frac{\pi}{N + 1}$, {\it Statement B} is confirmed.

\subsection*{\indent 3.2 Proof of Statements C and D} 
\indent

\subsubsection{Proof of Statement C}
Let us note that function $\phi(t)=\frac x{(x^2-a^2)^2}$ is increasing for $x\in(0,\abs{a}).$ Therefore,
$$
\frac{\cos\pa{t} \cos^2\pa{(N+1)t}}{\left(\cos\pa{2t}-\cos\pa{\frac\pi{N+1}}\right)^2}\le\frac{\cos\pa{t}}{\left(\cos\pa{2t}-\cos\pa{\frac\pi{N+1}}\right)^2}=\frac{\cos\pa{t}}{4\left(\cos^2\pa{t}-\cos^2\pa{\frac\pi{2(N+1)}}\right)^2}<
$$
$$
\frac{\cos\pa{\frac\pi{N+1}}}{4\left(\cos^2\pa{\frac\pi{N+1}}-\cos^2\pa{\frac\pi{2(N+1)}}\right)^2}=\frac{\cos\pa{\frac\pi{N+1}}}{\left(\cos\pa{\frac{2\pi}{N+1}}-\cos\pa{\frac\pi{N+1}}\right)^2},
$$
which implies the {\it Statement C}.

\subsubsection{Proof of Statement D}
We will begin the proof of {\it Statement D} with calculating the derivative of $v(t)$:
$$
v'(t) = \frac{1}{\sin^2\pa{t} - \sin^2\pa{\frac{\pi}{2N+2}}}
\left( -\frac{\cos\pa{t}}{2} \left( \sin^2\pa{t} + \sin^2\pa{\frac{\pi}{2N+2}} \right) +
\right.
$$
$$
+ \sin^2\pa{\frac{\pi}{2N+2}} \cos\pa{t} \cos\pa{(2N+2)t} -
$$
$$
\left.
- \frac{\sin^2\pa{\frac{\pi}{2N+2}}}{2N+2} \sin\pa{(2N+2)t} \sin\pa{t} 
\left( 1 + \frac{4\cos^2\pa{t}}{\sin^2\pa{t} - \sin^2\pa{\frac{\pi}{2N+2}}} \right)
\right).
$$
We need to check that
\begin{equation}\label{der}
\begin{split}
-\frac{\sin^2\pa{t} + \sin^2\pa{\frac{\pi}{2N+2}}}{2\sin^2\pa{\frac{\pi}{2N+2}}} + \cos\pa{(2N+2)t} - \\
- \frac{\sin\pa{(2N+2)t}}{2N+2}
\left( \tan\pa{t} + \frac{4\cos\pa{t}\sin\pa{t}}{\sin^2\pa{t} - \sin^2\pa{\frac{\pi}{2N+2}}} \right) < 0
\end{split}
\end{equation}
when $t \in \left( \frac{\pi}{N+1}, \frac{\pi}{2} \right),$ $N=2,3,4,\ldots .$

Since
$$
\frac{\sin^2\pa{t} + \sin^2\pa{\frac{\pi}{2N+2}}}{2\sin^2\pa{\frac{\pi}{2N+2}}} > 
\frac{\sin^2\pa{\frac{\pi}{N+1}} + \sin^2\pa{\frac{\pi}{2N+2}}}{2\sin^2\pa{\frac{\pi}{2N+2}}} = 2\cos^2\pa{\frac{\pi}{2N+2}} + 1 \ge 2,
$$
$$
\abs{\cos\pa{(2N+2)t}} \le 1,
$$
then it suffices to prove the inequality
\begin{equation}\label{rinq}
\frac{\sin\pa{(2N+2)t}}{2N+2}
\left( \tan\pa{t} + \frac{4\cos\pa{t}\sin\pa{t}}{\sin^2\pa{t} - \sin^2\pa{\frac{\pi}{2N+2}}} \right) \ge -1.
\end{equation}

Let us split the interval $\left( \frac{\pi}{N+1}, \frac{\pi}{2} \right)$ into two intervals:
$$
\left( \frac{\pi}{N+1}, \frac{\pi}{2} \right) = 
\left( \frac{\pi}{N+1}, \frac{\pi}{2} - \frac{\pi}{2N+2} \right) \cup
\left[ \frac{\pi}{2} - \frac{\pi}{2N+2}, \frac{\pi}{2} \right).
$$

We will consider the behavior of the function
$$
\Psi (t) = \frac{\sin\pa{(2N+2)t}}{2N+2}
\left( \tan\pa{t} + \frac{4\cos\pa{t}\sin\pa{t}}{\sin^2\pa{t} - \sin^2\pa{\frac{\pi}{2N+2}}} \right)
$$
on each of the specified intervals separately.

{\bf \emph{Case 1:}} $t \in I_2 = \left[ \frac{\pi}{2} - \frac{\pi}{2N+2}, \frac{\pi}{2} \right).$

When $N$ is even, the function $\Psi(t)$ is not negative on the interval $I_2$. Therefore, inequality~(\ref{rinq}) is satisfied. 
Let $N$ be odd. Then 
$$
\abs{\frac{\sin\pa{(N+1)t}}{(N+1)\cos\pa{t}}} \le 1,
$$
$$
 \frac{1}{\sin^2\pa{t} - \sin^2\pa{\frac{\pi}{2N+2}}} \le \frac{1}{\cos^2\pa{\frac{\pi}{2N+2}} - \sin^2\pa{\frac{\pi}{2N+2}}} = 
\frac{1}{\cos\pa{\frac{\pi}{N+1}}} \le \sqrt{2},
$$
hence, 
$$
\abs{\Psi(t)} \le \abs{\cos\pa{(N+1)t}} \left( 1 + 4\sqrt{2} \cos^2\pa{t} \right).
$$
It is convenient to make the substitution $\tau = \frac{\pi}{2} - t,$ $\tau \in \left( 0, \frac{\pi}{2N+2} \right].$ Then
$$
\abs{\cos\pa{(N+1)\tau}} \left( 1 + 4\sqrt{2} \sin^2\pa{\tau} \right) \le 
\left( 1 - \frac{(N+1)^2 \tau^2}{2} \right) \left( 1 + 4\sqrt{2} \tau^2 \right) \le 1.
$$
The last inequality is true, since $\frac{2}{(N+1)^2} < \frac{1}{4\sqrt{2}}.$ Thus, $\Psi(t) \ge -1,$ 
$t \in \left[ \frac{\pi}{2} - \frac{\pi}{2N+2}, \frac{\pi}{2} \right).$\\

{\bf \emph{Case 2:}} $t \in I_1 = \left( \frac{\pi}{N+1}, \frac{\pi}{2} - \frac{\pi}{2N+2} \right).$

Note that for $N=2,$ $I_1 = \varnothing;$ and for $N=3,$ $\Psi(t) \ge 0$ when $t \in I_1.$ Therefore, inequality~(\ref{rinq}) 
is valid when $N=2$ or $N=3$.

Let $N \ge 4.$ Estimate the quantity $\displaystyle\frac{\sin\pa{t}}{\sin^2\pa{t} - \sin^2\pa{\pi / (2N+2)}}.$ This function can be represented 
as a sum of two functions which are both decreasing on $I_1$, 
$$
\frac{\sin\pa{t}}{\sin^2\pa{t} - \sin^2\pa{\frac{\pi}{2N+2}}} = \frac{1}{\sin\pa{t} + \sin\pa{\frac{\pi}{2N+2}}} +
\frac{\sin\pa{\frac{\pi}{2N+2}}}{\sin^2\pa{t} - \sin^2\pa{\frac{\pi}{2N+2}}}.
$$
Therefore, it decreases on $I_1.$ Consider the function $\frac{N+1}{2} \left( \cos\pa{t} + \frac12 \frac{\sin^2\pa{t}}{\cos\pa{t}} \right).$
It increases on $I_1,$ since $\left( \cos\pa{t} + \frac12 \frac{\sin^2\pa{t}}{\cos\pa{t}} \right)' = \frac{\sin^3\pa{t}}{2\cos^2\pa{t}}.$ Let us show 
that for $t = \frac{\pi}{N+1}$, the inequality 
\begin{equation}\label{sinq}
\frac{\sin\pa{t}}{\sin^2\pa{t} - \sin^2\pa{\frac{\pi}{2N+2}}} < \frac{N+1}{2} \left( \cos\pa{t} + \frac12 \frac{\sin^2\pa{t}}{\cos\pa{t}} \right)
\end{equation}
holds, which will also imply the validity of inequality~(\ref{sinq}) for all $t \in I_1$.

Let us verify that inequality~(\ref{sinq}) is satisfied for $N=4$ and $N=5$, directly calculating the expressions in both sides 
of this inequality with $t = \frac{\pi}{N+1}.$ We will get $2.35\ldots < 2.55\ldots$ and $2.73\ldots < 3.03\ldots$ 
for $N=4$ and $N=5$, respectively.

Let us show the validity of inequality~(\ref{sinq}) for the rest $N \ge 6.$ Substitute $x = \frac{\pi}{N+1},$ then 
inequality~(\ref{sinq}) for $t = \frac{\pi}{N+1}$ becomes the inequality
\begin{equation}\label{xinq}
\frac{\sin\pa{x}}{\sin^2\pa{x} - \sin^2\pa{\frac{x}{2}}} < \frac{\pi}{2x} \left( \cos\pa{x} + \frac12 \frac{\sin^2\pa{x}}{\cos\pa{x}} \right),\;
x \in \left( 0, \frac{\pi}{7} \right].
\end{equation}
Use the identity 
$$
\sin^2\pa{x} - \sin^2\pa{\frac{x}{2}} = \frac12 \left( 1 + \cos\pa{x} - 2\cos^2\pa{x} \right) = (1 - \cos\pa{x})\left( \frac12 + \cos\pa{x} \right)
$$
and rewrite inequality~(\ref{xinq}) as
\begin{equation}\label{xinq1}
\frac{4}{\pi} x \sin\pa{x}\cos\pa{x} < \left( 1 + \cos^2\pa{x} \right) (1 - \cos\pa{x}) \left( \frac12 + \cos\pa{x} \right).
\end{equation}
Apply the inequality 
\begin{equation}\label{aux}
\frac{4}{\pi} x \sin\pa{x} < 1 - \cos^3\pa{x} = \pa{1 - \cos\pa{x}}\pa{1 + \cos\pa{x} + \cos^2\pa{x}},\;
x \in \left( 0, \frac{\pi}{7} \right]
\end{equation}
to inequality \eqref{xinq1}, thus obtaining 
\begin{gather}\label{xinq2}
\cos\pa{x} \pa{1 - \cos\pa{x}} \left( 1 + \cos\pa{x} + \cos^2\pa{x} \right) \\
< \pa{1 - \cos\pa{x}}\left( 1 + \cos^2\pa{x} \right) \left( \frac12 + \cos\pa{x} \right) \nonumber
\end{gather}
We will give the proof of inequality~(\ref{aux})  at the end of section 3.2, after verifying \eqref{xinq2}.

Verifying \eqref{xinq2} is equivalent to checking the inequality
$$
\left( 1 + \cos^2\pa{x} \right) \left( \frac12 + \cos\pa{x} \right) - \cos\pa{x} \left( 1 + \cos\pa{x} + \cos^2\pa{x} \right) > 0.
$$
We have 
$$
\left( 1 + \cos^2\pa{x} \right) \left( \frac12 + \cos\pa{x} \right) - \cos\pa{x} \left( 1 + \cos\pa{x} + \cos^2\pa{x} \right) = \frac12 \sin^2\pa{x} > 0
$$
when $x \in \left( 0, \frac{\pi}{7} \right].$

Applying \eqref{sinq}, we finally estimate the function $\Psi(t)$.
$$
\abs{\Psi(t)} \le \frac{1}{2N+2} \left( \tan\pa{t} + \frac{4\cos\pa{t}\sin\pa{t}}{\sin^2\pa{t} - \sin^2\pa{\frac{\pi}{2N+2}}} \right) <
$$
$$
< \frac{1}{2N+2} \tan\pa{t} + \cos^2\pa{t} + \frac12 \sin^2\pa{t} = 1 + \frac{1}{2N+2} \tan\pa{t} - \frac12 \sin^2\pa{t} =
$$
$$
= 1 + \frac14 \tan\pa{t} \left( \frac{2}{N+1} - \sin\pa{2t} \right). 
$$
Since $t \in I_1$, this means that $2t \in \pa{\frac{2\pi}{N + 1},\pi - \frac{\pi}{N + 1}}$. Recall that the sine wave increases on this interval for $2t < \frac{\pi}{2}$ and decreases for $2t > \frac{\pi}{2}$. Thus, for $N \geq 6$, using the symmetry of the sine wave we have
$$
\sin\pa{2\pa{\frac{\pi}{2} - \frac{\pi}{2N + 2}}} = \sin\pa{\pi - \frac{\pi}{N + 1}} = \sin\pa{\frac{\pi}{N+1}} < \sin\pa{\frac{2\pi}{N+1}}.
$$
Therefore,
$$
1 + \frac14 \tan\pa{t} \left( \frac{2}{N+1} - \sin\pa{2t} \right) \le 
$$
$$
= 1 + \frac14 \tan\pa{t} \left( \frac{2}{N+1} - \sin\pa{\frac{\pi}{N+1}} \right) < 1,  
$$
since $\sin\pa{\frac{\pi}{N+1}} > \frac{2}{N+1}$ when $N\ge 3.$ Whence, {\it Statement D} is proved.\\

In conclusion, let us prove inequality~(\ref{aux}). Rewrite it in the form
$$
\cos^3\pa{x} < 1 - \frac{4}{\pi} x \sin\pa{x}.
$$
We have 
$$
\cos^3\pa{x} < \left( 1 - \frac12 x^2 + \frac{1}{24} x^4 \right)^3 = 
$$
$$
= 1 - \frac32 x^2 + \frac78 x^4 - \frac14 x^6 + \frac{7}{192} x^8 - \frac{1}{384} x^{10} + \frac{3}{13824} x^{12},
$$
$$
1 - \frac{4}{\pi} x \sin\pa{x} > 1 - \frac{4}{\pi} x^2.
$$
Calculate
$$
1 - \frac{4}{\pi} x^2 - \left( 1 - \frac12 x^2 + \frac{1}{24} x^4 \right)^3 = 
$$
$$
= x^2 \left( \frac{3\pi-8}{2\pi} - \frac78 x^2 \right) + x^6 \left( \frac14 - \frac{7}{192} x^2 \right) +
x^{10} \left( \frac1{384} - \frac{1}{13824} x^2 \right).
$$
This difference is obviously positive when $0 < x^2 \le \frac{4(3\pi-8)}{7\pi},$ i.e. when 
$0 < x \le \sqrt{\frac{4(3\pi-8)}{7\pi}}=0.509\ldots.$ 
But $\frac{\pi}{7} = 0.44\ldots < 0.509,$ therefore, inequality~(\ref{aux}) holds at least for $x \in \left( 0, \frac{\pi}{7} \right].$

The assertion of {\it Theorem~\ref{03}} follows from {\it Statements A, B, C, D}, which we have proved. Thus, {\it Theorem 3} is also 
completely proved.

\section{{Discussion of the results}}
The statement of the problems \eqref{brn} and \eqref{jn} considered above can be generalized as follows.

Optimization problem: on the class of the univalent in $\mathbb D = \left\{ z:\, \abs{z}<1 \right\}$ polynomials with the $T$-fold  symmetry 
of degree $(N-1)T+1$ with real coefficients
$$
F(z) = \sum\limits_{j=1}^{N} {a_j z^{T(j-1)+1}} 
$$
find
\begin{equation}\label{jng}
J^{(T)}_N = \min_{a_j:\,a_1=1} \left( \abs{F \left( e^{\frac{i\pi}{T}} \right)} \right),
\end{equation}
as well as the polynomial (extremizer) that solves the optimization problem.\\

Comparative analysis of polynomials \eqref{extr1} and \eqref{extr2} shows a remarkable feature in the first factor. 
Namely, in the odd case it is the same multiplier, only with odd numbers. Hence, it is logical to assume that in the 
$T$-symmetric case there will be the same factor, only with the numbers forming a $T$-arithmetic sequence.

The middle factor from \eqref{extr1} is absent in \eqref{extr2}, so that we can assume it to equal one. It is logical to assume that in the  $T$-symmetric case this will be the factor that turns into the middle one from formula~\eqref{extr2} 
in the case $T=1,$ and into one in the case $T=2.$ Such a factor has been proposed in \cite{DSST} in the form of the product 
that in the limit as $N\to\infty$ gives the corresponding coefficient of the $T$-symmetric Koebe function. These arguments 
lead to the following hypothesis.\\

\begin{conjecture}\label{con1}
 The extremizer for the problem \eqref{jng} is as follows
$$
F^{(0)}(z) = z+ \sum\limits_{j=2}^{N} {a^{(0)}_j z^{T(j-1)+1}},
$$
$$
a_j^{(0)} = \frac{U'_{T(N-j+1)} \left( \cos\pa{\frac{\pi}{TN+2}}\right)}{U'_{TN} \left( \cos\pa{\frac{\pi}{TN+2}}\right)}
\prod\limits_{k=1}^{j-1} \frac{\sin\pa{\frac{\pi(2+T(k-1))}{TN+2}}}{\sin\pa{\frac{\pi T k}{TN+2}}},\;
j=2,\ldots,N.
$$
Moreover, it is univalent.
\end{conjecture}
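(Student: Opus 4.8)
\medskip
\noindent\emph{A proposed route to Conjecture~\ref{con1}.}
The plan is to reproduce, for general $T$, the four-stage architecture behind Theorem~\ref{01} ($T=2$) and the argument of \cite{DSS} ($T=1$): (i) relax ``$T$-fold symmetric univalent'' to a weaker analytic condition on the boundary values of $F$; (ii) encode that condition as the nonnegativity of a single trigonometric polynomial with coefficients linear in $a_1,\dots,a_N$, together with a sign normalization of its leading coefficient; (iii) solve the resulting linear extremal problem by a sharp Fej\'er--Egerv\'ary--Sz\'asz-type bound; and (iv) verify a posteriori that the unique candidate thus produced is univalent, by the boundary-curve analysis of Theorem~\ref{03}. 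All of the $T$-dependence, and essentially all of the difficulty, sits in stages (i)--(iii).

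Stage (i) is best set up via the $T$-fold square-root transform. Write $F(z)=zQ(z^T)$ with $Q(w)=\sum_{j=1}^N a_jw^{\,j-1}$, so $\deg Q=N-1$, $Q(0)=a_1=1$, and $F(z)^T=g(z^T)$ for the polynomial $g(w)=wQ(w)^T$ of degree $T(N-1)+1$; then $F$ is $T$-fold symmetric and univalent on $\mathbb D$ if and only if $g$ is univalent on $\mathbb D$. Since $F\pa{e^{i\pi/T}}=e^{i\pi/T}\sum_{j=1}^N(-1)^{j-1}a_j=e^{i\pi/T}Q(-1)$ and $g(-1)=-Q(-1)^T$, problem \eqref{jng} is equivalent to minimizing $\abs{Q(-1)}=\abs{g(-1)}^{1/T}$ over univalent polynomials $g$ of degree at most $T(N-1)+1$ of the \emph{product form} $g=wQ^T$; dropping the product restriction, Brandt's theorem \eqref{brn} already gives $J^{(T)}_N\ge\pa{\tfrac14\sec^2\tfrac{\pi}{T(N-1)+3}}^{1/T}$, but this is not sharp, because $g=wQ^T$ is a binding constraint. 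To capture it I would use the geometric relaxation that $F$ maps the circular sector $\bra{\abs{z}<1,\ \abs{\arg z}<\pi/T}$ into the angular sector $\bra{\abs{\arg w}<\pi/T}$; equivalently, that a suitable branch of $h(w):=\sqrt w\,Q(w)^{T/2}$, single-valued on $\mathbb D$ cut along $(-1,0]$ and positive on $(0,1)$, satisfies $\operatorname{Re}h\ge0$ on the unit circle. For $T=2$ this is exactly the hypothesis of Theorem~\ref{01}, and for $T=1$ it reduces to the typical-realness used in \cite{DSS}. The first genuinely new step is to show that $T$-fold symmetric univalence forces this condition --- i.e.\ that the boundary arc $\bra{F(e^{it}):0\le t\le\pi/T}$ cannot overshoot the bounding spoke $\arg w=\pi/T$ --- which is the sectorial analogue of the fact that a univalent polynomial with real coefficients is typically real.

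Stages (ii)--(iv) should then follow the proof of Theorem~\ref{01} closely. Expanding $\operatorname{Re}h(e^{i\phi})\ge0$ (with $\phi=Tt$) in $a_1,\dots,a_N$ should produce a nonnegative trigonometric polynomial whose extremal analysis is governed by the integer $TN+2$ appearing in the conjecture, the half-integer power in $\sqrt w\,Q^{T/2}$ being what raises the effective degree from the $N$-scale to the $TN$-scale. Passing to $\gamma_s=\sum_{j\ge s}(-1)^{s+j}a_j$ as in \eqref{cg}, with $\gamma_1=Q(-1)$ the quantity to be minimized and $a_1=\gamma_1+\gamma_2=1$, one should land on a problem of the form ``minimize $\gamma_1$ subject to $1+2\sum_{j=1}^{N-1}(\gamma_{j+1}/\gamma_1)\,c_j(\phi)\ge0$'', with building blocks $c_j$ that collapse to $\cos(j\phi)$ for $T=2$ and to the Fej\'er/Suffridge blocks for $T=1$; solving it by a Fej\'er--Riesz factorization and a quadrature identity at the nodes $\tfrac{k\pi}{TN+2}$ --- just as Problem~52 of \cite{Pol-Sz} and the Egerv\'ary--Sz\'asz formula of \cite{Eger} are used in the $T=2$ proof --- should determine $\abs{\gamma_2/\gamma_1}$ and then every $\gamma_{j+1}/\gamma_1$ uniquely. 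Translating back via $a_s=\gamma_s+\gamma_{s+1}$ and then via \cite[Lemma~2]{DSS} should yield the asserted coefficients; the product $\prod_{k=1}^{j-1}\sin\pa{\tfrac{\pi(2+T(k-1))}{TN+2}}/\sin\pa{\tfrac{\pi Tk}{TN+2}}$ is precisely the discrepancy between the ``$\sqrt w$-shifted'' coefficients and the unshifted Fej\'er ones, and one checks in advance that it degenerates to $1$ when $T=2$ and to $U_{j-1}\pa{\cos\tfrac{\pi}{N+2}}$ when $T=1$, recovering \eqref{extr2} and \eqref{extr1}. Univalence (stage (iv)) would be proved as in Section~3: a closed form for $F^{(0)}$ (the $T$-analogue of Theorem~\ref{02}, from geometric-series identities), then monotonicity of $\operatorname{Re}F^{(0)}(e^{it})$ and $\operatorname{Im}F^{(0)}(e^{it})$ on suitable subintervals of $[0,\pi/T]$, mirroring Statements~A--D.

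Two obstacles should dominate. First, stage (iii) needs a sharp extremal theorem for nonnegative trigonometric polynomials with the ``half-integer-shifted, step-$T$'' structure coming from $h=\sqrt w\,Q^{T/2}$, with nodes at $\tfrac{k\pi}{TN+2}$; for $T\in\{1,2\}$ this is classical material (\cite{Pol-Sz,And-Dim,Eger}), but for $T\ge3$ the required ``fractional Fej\'er kernel'' statement does not appear to be in the literature and would have to be proved outright, most plausibly by writing down and factoring the extremal candidate directly. Second, the univalence proof: already the most technical part of Theorem~\ref{03}, the boundary-curve analysis becomes heavier for general $T$ --- more critical points of $\operatorname{Re}F^{(0)}(e^{it})$, and the extra product factor complicating every explicit expression --- so obtaining the required monotonicity estimates uniformly in both $N$ and $T$ is where the bulk of the work would lie.
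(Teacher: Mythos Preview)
The paper does not prove this statement: it is explicitly labeled \emph{Conjecture~\ref{con1}} and appears in Section~4 (``Discussion of the results'') as an open problem. The paper's actual contribution is the special case $T=2$ (Theorems~\ref{01}--\ref{03}), together with the case $T=1$ established in~\cite{DSS}; the general-$T$ formula is arrived at purely by pattern-matching between the two known extremizers \eqref{extr1} and \eqref{extr2} --- observing that the first factor generalizes by replacing the index with a $T$-arithmetic progression, and that the missing second factor should interpolate between $U_{j-1}(\cos\tfrac{\pi}{N+2})$ at $T=1$ and $1$ at $T=2$. No argument is offered, and none is claimed. So there is no ``paper's own proof'' to compare your proposal against.

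Your write-up is accordingly not a proof either but a programme, and you present it as such. The four-stage architecture you outline --- relax $T$-fold univalence to a sectorial positivity condition, encode that as nonnegativity of a trigonometric polynomial, solve the linear extremal problem via a Fej\'er--Egerv\'ary--Sz\'asz bound, then verify univalence of the candidate by boundary-curve analysis --- is precisely the plan the paper executes for $T=2$, and your sanity checks (that the conjectured product collapses to $1$ for $T=2$ and telescopes to $U_{j-1}(\cos\tfrac{\pi}{N+2})$ for $T=1$) are correct. The two obstacles you flag are real and are exactly why the paper leaves the statement as a conjecture: the sharp extremal inequality for the ``step-$T$, half-shifted'' nonnegative trigonometric polynomials with nodes at $k\pi/(TN+2)$ is not in the cited literature for $T\ge3$, and the univalence argument of Section~3 already consumes the bulk of the paper at $T=2$, relying on explicit factorizations of $\cos((N+1)t)$ and $\sin((2N+2)t)$ with no evident $T$-analogue. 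Until those two ingredients are supplied, the statement remains a conjecture; your proposal is a reasonable roadmap, but it should not be mistaken for a proof.
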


In its turn, Conjecture \ref{con1} leads to the following problem. 

\begin{problem}\label{prob}
Determine the asymptotics of the coefficients of $F^{(0)}(z)$ and the quantity $\abs{F^{(0)} \left( e^{\frac{i\pi}{T}} \right)}$.\end{problem}

\begin{figure}[h!]
\centering
\includegraphics[scale=0.2]{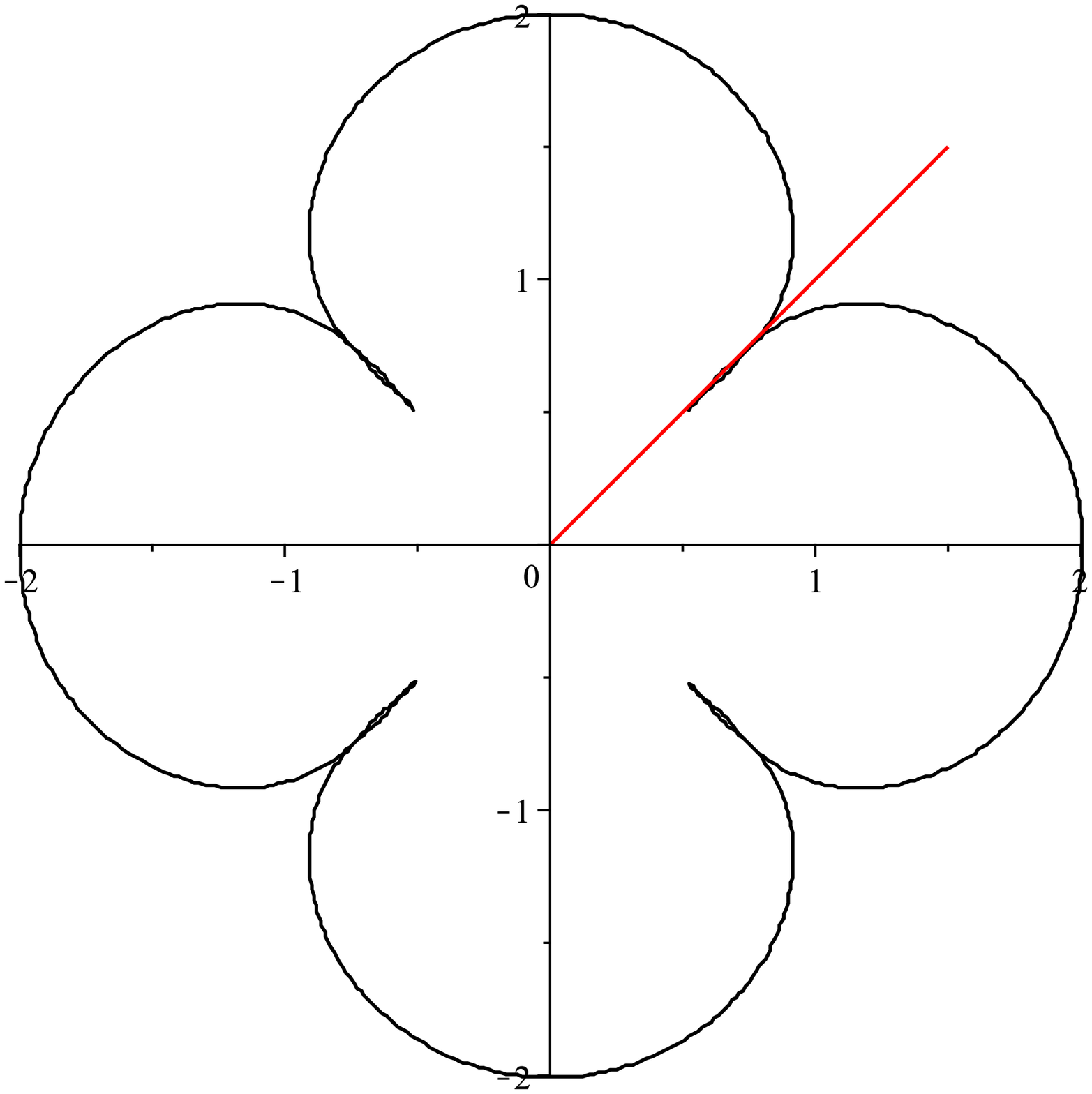}
\hspace{1cm}
\includegraphics[scale=0.2]{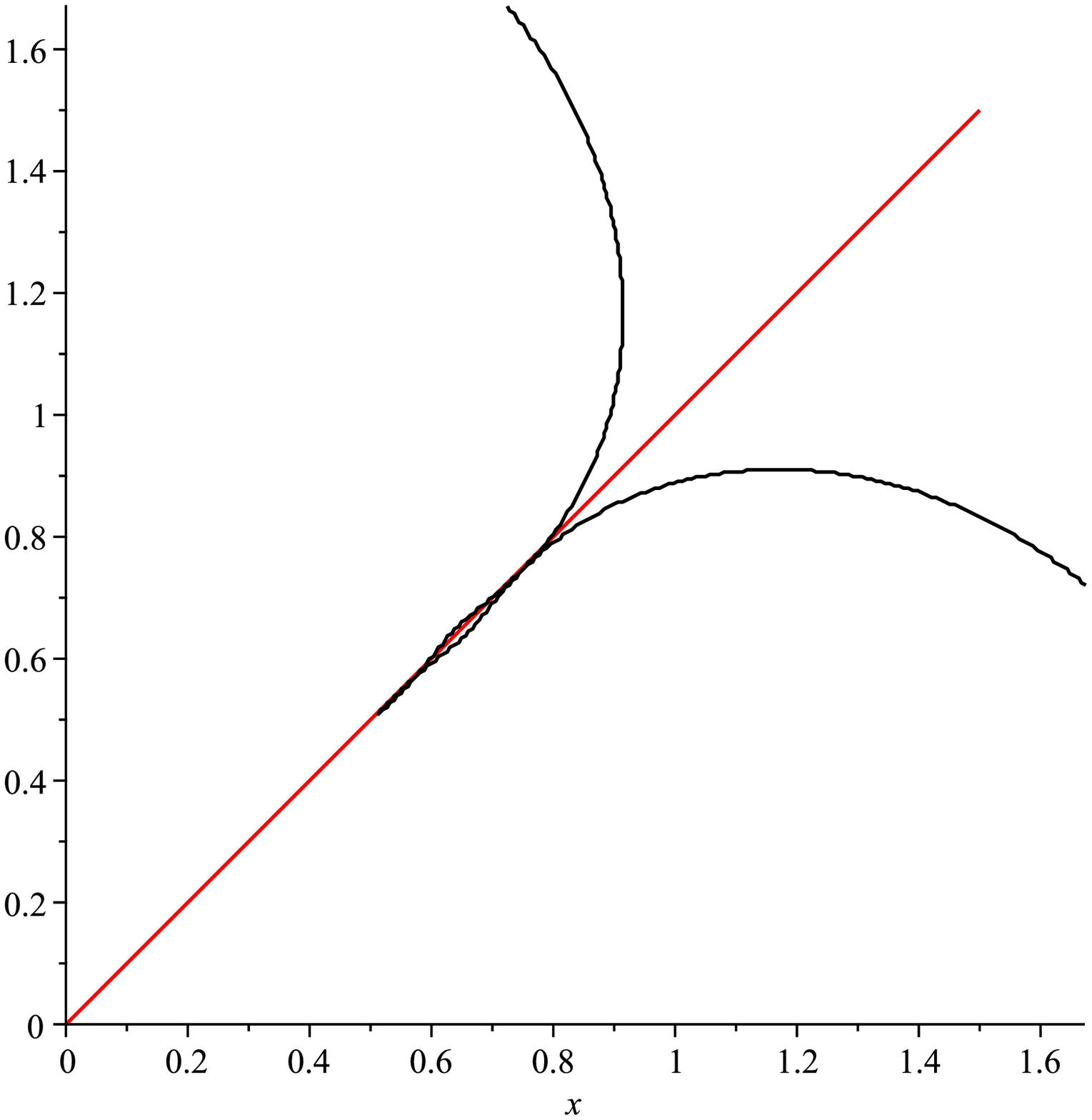}
\caption{The image $F^{(0)}(\mathbb D)$ at $N=7,$ $T=4.$} \label{img}
\end{figure}

Figure~\ref{img} shows the image of the unit disc in the special case $N=7,$ $T=4.$ This picture argues in favor of the 
assumption that the polynomials are univalent, which is confirmed in many other pictures plotted by the authors, but not included in this work.\\


Note that Problem \ref{prob} is a polynomial version of the $T$-symmetric Koebe theorem, which states that the image of the unit disc under the $T$-symmetric schlicht mapping contains a disc of radius $4^{-1/T}.$ The sharpness of the constant is given by the value 
 $\abs{K^{(T)} \left( e^{\frac{i\pi}{T}} \right)},$
where $\displaystyle K^{(T)}=\frac z{(1-z^T)^{2/T}}$ is
$T$-symmetric Koebe function. This leads to the next conjecture.

\begin{conjecture}\label{con2}
 The image of the unit disc under the $T$-symmetric schlicht polynomial degree $N$ contains a disc of radius $\abs{F^{(0)} \left( e^{\frac{i\pi}{T}} \right)}.$ The value is sharp. 
\end{conjecture}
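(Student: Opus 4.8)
The plan is to deduce Conjecture~\ref{con2} from Conjecture~\ref{con1}, mirroring the way Theorems~\ref{01}--\ref{03} together yield the Koebe--radius estimate in the odd case $T=2$. Write $J_N^{(T)}:=\abs{F^{(0)}\pa{e^{i\pi/T}}}$. Granting Conjecture~\ref{con1}, problem~\eqref{jng} has value $J_N^{(T)}$ and $F^{(0)}$ is an admissible (univalent) competitor, so $\inf_F\abs{F(e^{i\pi/T})}=J_N^{(T)}$. Since for a univalent polynomial $\partial F(\mathbb D)=F(\partial\mathbb D)$, the largest disc about the origin contained in $F(\mathbb D)$ has radius $\min_{\abs z=1}\abs{F(z)}$; hence Conjecture~\ref{con2} reduces to the two assertions (i) $\min_{\abs z=1}\abs{F^{(0)}(z)}=J_N^{(T)}$ (sharpness) and (ii) $\min_{\abs z=1}\abs{F(z)}\ge J_N^{(T)}$ for every admissible $F$ (the covering bound).

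For (i) I would use a closed form for $F^{(0)}$. The computation of Theorem~\ref{02} generalizes to arbitrary $T$ (again by summing a geometric series and its derivative), giving an explicit rational expression for $F^{(0)}$ and hence an explicit trigonometric formula for $\abs{F^{(0)}(e^{it})}^2$. By the $T$-fold and conjugation symmetries it suffices to minimize this over $t\in\squ{0,\pi/T}$, and I would show the minimum is attained at the endpoint $t=\pi/T$, where the value is $J_N^{(T)}$, via a monotonicity analysis of numerator and denominator paralleling \textbf{Statements A--D} of Section~3 (the factorizations $\prod_k\bigl(1-\sin^2 t/\sin^2(k\pi/(TN+2))\bigr)$ from formulas 1.391 of \cite{Grad} again carry the argument). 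This yields that $F^{(0)}(\mathbb D)$ contains no disc about $0$ of radius exceeding $J_N^{(T)}$.

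Assertion (ii) is the heart of the matter, and it does \emph{not} follow from~\eqref{jng}: that problem controls only the single boundary value $F(e^{i\pi/T})$, whereas the nearest boundary point of a generic admissible $F$ typically lies off the symmetry ray --- already for $T=2$, $N=3$ the polynomial $F(z)=z+\delta z^{5}$ has $\min_{\abs z=1}\abs{F(z)}=1-\delta$, attained at $e^{i\pi/4}$, while $\abs{F(e^{i\pi/2})}=1+\delta$. To prove (ii) I would adapt the classical proof of the Koebe one--quarter theorem: if $w_0$ is a nearest boundary point of $F(\mathbb D)$, apply a conformal transformation sending $w_0$, together with its whole orbit under the order-$2T$ dihedral symmetry group of $F(\mathbb D)$, to infinity; the composition, suitably renormalized, is again a $T$-symmetric univalent function with real coefficients, and a Fej\'er--Suffridge--type bound on its low-order Taylor coefficients --- the same family of inequalities exploited in the proof of Theorem~\ref{01}, e.g.\ Problem~52 in \cite{Pol-Sz} --- should force $\abs{w_0}\ge J_N^{(T)}$. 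The principal obstacles are that this Koebe-type transform destroys both the polynomial (degree-$((N-1)T+1)$) structure and, before the symmetrization just described, the $T$-fold symmetry, so one must carry out the coefficient estimate in a carefully chosen, degree-controlled superclass (in the spirit of the ``typically real'' relaxation of Theorem~\ref{01}); and one must show, by a variational argument, that the extremal configuration for $\min_{\abs z=1}\abs{F(z)}$ over the class places its nearest boundary point exactly on the ray $\arg w=\pi/T$, which is what reduces (ii) to~\eqref{jng} and identifies the extremizer as $F^{(0)}$. I expect this last step --- proving that the Koebe-radius extremizer coincides with the extremizer of~\eqref{jng} --- to be the main difficulty.
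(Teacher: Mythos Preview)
The paper does \emph{not} prove Conjecture~\ref{con2}; it is stated in Section~4 purely as a conjecture, with no accompanying argument. There is therefore no ``paper's own proof'' to compare your proposal against. (Even for $T=2$, the paper establishes only the value of $J_N$ in~\eqref{jn} and the univalence of $F^{(0)}$; it does not show that $\min_{\abs z=1}\abs{F^{(0)}(z)}$ is attained at $z=i$, nor that every admissible $F$ covers a disc of radius $J_N$.)

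As a proof attempt, your write-up is honest about its own status: it is a \emph{plan}, not a proof, and you correctly isolate the real obstruction. Your example $F(z)=z+\delta z^{5}$ is exactly the right kind of evidence that the Koebe-radius problem is strictly harder than~\eqref{jng}: the nearest boundary point can lie off the symmetry ray, so bounding $\abs{F(e^{i\pi/T})}$ from below does not bound $\min_{\abs z=1}\abs{F(z)}$. The two genuine gaps you flag are real and unresolved: (a) your argument is conditional on Conjecture~\ref{con1}, which is itself open for $T\ge 3$; and (b) the Koebe-type transform you propose for step~(ii) destroys the polynomial degree, and nothing in the paper (or in the standard toolkit around Fej\'er/typically-real inequalities) currently supplies a degree-controlled substitute. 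The variational claim at the end --- that the Koebe-radius extremizer must have its nearest boundary point on the ray $\arg w=\pi/T$ --- is precisely the content of Conjecture~\ref{con2} and would need an independent idea.

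In short: your diagnosis of the difficulty is accurate, but what you have written is a programme rather than a proof, and the paper offers no proof either.
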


Note that in case $T=1$ the proposed value is $\displaystyle\frac14 \sec^2{\frac{\pi}{N+2}}$ while in case $T=2$ it is $\displaystyle\frac12 \sec^2{\frac{\pi}{2N+2}}.$

\section{Acknowledgment}
The authors would like to thank Larie Ward for her help in preparation of the manuscript.

\bibliographystyle{unsrt}
\bibliography{brandtbib}

\end{document}